\documentclass[11pt]{article}
\usepackage[a4paper,margin=27mm]{geometry}
\usepackage{amsmath,amssymb,amsthm}
\usepackage[T1]{fontenc}
\usepackage{hyperref}
\hypersetup{colorlinks=true,linkcolor=blue,urlcolor=blue,
 pdftitle={Maximum-modulus points on concentric circles},
 pdfauthor={Yunpeng Li}}
\newtheorem{theorem}{Theorem}
\newtheorem{lemma}[theorem]{Lemma}
\newtheorem{proposition}[theorem]{Proposition}
\newtheorem{corollary}[theorem]{Corollary}
\theoremstyle{remark}
\newtheorem{remark}[theorem]{Remark}
\newtheorem{example}[theorem]{Example}
\numberwithin{equation}{section}
\newcommand{\C}{\mathbb C}
\newcommand{\N}{\mathbb N}

\newcommand{\Cs}{\C^{\times}}
\newcommand{\Chat}{\widehat{\C}}
\newcommand{\card}{\operatorname{card}}
\newcommand{\ord}{\operatorname{ord}}
\title{Maximum-modulus points on concentric circles}
\author{Yunpeng Li\\[3pt]
 \small School of Data Science, The Chinese University of Hong Kong, Shenzhen, China\\
 \small\texttt{liyunpeng@cuhk.edu.cn}}
\date{14 October 2026}
\begin{document}
\maketitle

\begin{abstract}
We give a negative answer to the second question of Erd\H{o}s
Problem~\#1117: can the number $\nu_f(r)$ of maximum-modulus points of a
nonzero, non-monomial entire function on $|z|=r$ tend to infinity as $r\to\infty$?
Write $f(z)=z^m g(z)$ with $g(0)\ne0$, and let $k$ be the first positive
exponent with a nonzero coefficient in the Taylor expansion of $g$.
We prove that $\{r>0:\nu_f(r)>2k\}$ is locally finite in $(0,\infty)$,
and that the constant $2k$ is sharp. In particular,
$\liminf_{r\to\infty}\nu_f(r)\le2k<\infty$, without a growth assumption.
The proof combines finite local parametrizations of a
logarithmic-derivative correspondence with analytic continuation and a
local description of stationary points.
\end{abstract}

\section{The problem and the result}\label{sec:introduction}

For an entire function $f$ and $r>0$, let
\[
 M_f(r)=\max_{|z|=r}|f(z)|,\qquad
 \mathcal M_f(r)=\{z\in\C:|z|=r,\ |f(z)|=M_f(r)\}.
\]
The maximum exists by compactness. Throughout, a \emph{non-monomial
entire function} means a nonzero entire function that is not $cz^m$
with $c\ne0$ and $m\in\N=\{0,1,2,\ldots\}$. For such a function,
$\mathcal M_f(r)$ is finite at every positive radius
(Lemma~\ref{lem:circle-finite}), and we write
$\nu_f(r)=\card\mathcal M_f(r)$.

Around 1964, Erd\H{o}s asked whether a non-monomial entire function
could satisfy either of the following conditions
\cite[p.~96]{Piranian1977}:
\begin{equation}\label{eq:erdos-questions}
 \begin{aligned}
 \text{(a)}\quad&\limsup_{r\to\infty}\nu_f(r)=\infty,\\
 \text{(b)}\quad&\liminf_{r\to\infty}\nu_f(r)=\infty.
 \end{aligned}
\end{equation}
These are the two questions in Erd\H{o}s Problem~\#1117
\cite{EP1117}. They were recorded as Problem~2.16 in Hayman's 1967
book \cite{Hayman1967}, and are reproduced in Hayman--Lingham
\cite[Problem~2.16, printed p.~29]{HL} and
Gl\"ucksam--Pardo-Sim\'on \cite[Question~1.1]{GP}.
The second question was later also attributed to Clunie; see
\cite{GP} for the historical discussion.

Herzog--Piranian \cite{HP} answered~(a) affirmatively in 1968.
Pardo-Sim\'on--Sixsmith \cite{PS} subsequently obtained an affirmative
answer with a finite-order entire function. These results give
unbounded counts along selected radii. Question~(b) asks for more:
one function whose count exceeds every fixed integer on all sufficiently
large circles. Gl\"ucksam--Pardo-Sim\'on \cite{GP} obtained an approximate
version of~(b), involving points close to the maximum modulus.

We answer~(b) negatively. In fact, the count is bounded by a constant
determined by the gap between the first two nonzero Taylor terms of $f$, except
at a locally finite set of radii.

\begin{theorem}\label{thm:main}
Let $f$ be a non-monomial entire function. Write
\[
 f(z)=z^m g(z),\qquad
 g(z)=a_0+a_kz^k+O(z^{k+1}),\qquad a_0a_k\ne0,
\]
where $g$ is entire, $m=\ord_0 f$, and $k\ge1$ is the first positive exponent occurring
in the Taylor expansion of $g$. Then
\[
 E_f=\{r>0:\nu_f(r)>2k\}
\]
is locally finite in $(0,\infty)$: it meets every compact subinterval
in finitely many points. The constant $2k$ is sharp.
\end{theorem}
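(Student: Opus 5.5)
The plan is to make precise the heuristic that maximum-modulus points lie on finitely many real-analytic ``critical branches'' in each compact range of radii. Two such branches can have the same modulus only at isolated radii, unless they are tied together by an analytic identity. Any such identity survives continuation down to small radii, where the picture is governed by the two leading Taylor terms and there are only $2k$ critical points on each circle.

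\textbf{Step 1 (critical branches).} Write $z=re^{i\theta}$ and $h(z)=zf'(z)/f(z)$. Then
\[
\partial_\theta\log|f(re^{i\theta})|=-\operatorname{Im}h(z),
\]
so every point of $\mathcal M_f(r)$ lies on the real-analytic set $\Gamma=\{z:\ f(z)\ne0,\ \operatorname{Im}h(z)=0\}$. The same holds for the correspondence $\{(z,t): h(z)=t\in\mathbb R\}$. Over a compact interval $[r_1,r_2]$, Lemma~\ref{lem:circle-finite} and the non-monomial hypothesis show that $h$ is non-constant and that $\Gamma\cap\{r_1\le|z|\le r_2\}$ is a finite union of arcs. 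By Puiseux-type local parametrizations, each arc is described by finitely many convergent branches $\theta=\theta_j(r)$, analytic in a fractional power of $r-r_0$ near each $r_0$. Along each branch, $\phi_j(r)=\log|f(re^{i\theta_j(r)})|$ is real-analytic in the same local parameter. Hence, near any $r_0$, $\mathcal M_f(r)$ is contained in a fixed finite family of branches.

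\textbf{Step 2 (reduction to identities).} Suppose $r_n\to r_0\in(0,\infty)$ with $\nu_f(r_n)>2k$ and the $r_n$ distinct. After passing to a subsequence, there is a fixed set of more than $2k$ distinct branches, all attaining $M_f(r_n)$ at every $r_n$. The differences $\phi_i-\phi_j$ are analytic in the local parameter and vanish on a sequence accumulating at $r_0$, so they vanish identically. We obtain more than $2k$ distinct branches with $\phi_i\equiv\phi_j$ on a one-sided interval. Differentiating in $r$ and using $\partial_r\log|f|=\operatorname{Re}h/r$ on $\Gamma$ gives $h(z_i(r))=h(z_j(r))$, a common real value $t(r)$. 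Thus the points $z_j(r)$ are distinct preimages of one real value under $h$, all lying on the same circle, with equal $|f|$. These are all holomorphic-type identities.

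\textbf{Step 3 (continuation to $r\to0$, main obstacle).} I would continue these identities analytically along the branches, decreasing $r$, through the finitely many singular radii of $\Gamma$ in each compact range. I expect this to be the hardest step: one must show that the family of more than $2k$ mutually tied branches persists, with no branch escaping to a zero of $f$ or merging irrecoverably. I would handle it by working in the correspondence $\{(z,w):|z|=|w|,\ h(z)=h(w),\ |f(z)|=|f(w)|\}$. Its irreducible components are analytic, so an identity valid on an interval propagates along the whole component (the ``analytic continuation'' step announced in the abstract). Near $0$ we have
\[
\log|f|=m\log r+\log|a_0|+\operatorname{Re}\bigl((a_k/a_0)z^k\bigr)+O(r^{k+1}).
\]
Hence for small $r$ the set $\Gamma$ consists of exactly $2k$ arcs, near the maxima and minima of $\cos(k\theta+\arg(a_k/a_0))$; this is the ``local description of stationary points''. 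Tied branches that are distinct at $r_0$ stay distinct along the continuation, so they land on distinct small-$r$ arcs. That forces at most $2k$ of them, a contradiction. Local finiteness of $E_f$ follows.

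\textbf{Sharpness.} For sharpness I would take $f(z)=g(z^k)$, so that $\mathcal M_f(r)$ is the $k$-fold preimage of $\mathcal M_g(r^k)$. It then suffices to build $g$ with $g'(0)\ne0$ and $\nu_g(s)=2$ on a set of radii $s$ that is not bounded. I would look for a $g$ with a conjugation-type symmetry giving two symmetric maxima, checking by a direct computation on explicit exponential combinations.
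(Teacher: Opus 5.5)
Your Steps 1--2 are sound. They amount to the paper's one-sided local constancy (Lemma~\ref{lem:onesided}) together with its observation that all maximum points at a suitable radius share one real value of $zf'/f$.

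The gap is Step~3, and its persistence claim is false as stated, not merely unproved. Your continuation uses ``more than $2k$'' only in the final count. So the same reasoning would show that a tied family of exactly $2k$ branches lands on $2k$ mutually tied stationary arcs at small radii. The sharpness example $f_k(z)=\exp(z^k-z^{2k}/4)$ refutes this. Put $u=z^k$, so that $h=k(u-u^2/2)$.
\begin{itemize}
\item For $r>1$, the $2k$ maxima are given by $u=1\pm i\sqrt{r^{2k}-1}$. They are distinct stationary branches on one circle, with equal heights and common real value $h=kr^{2k}/2$.
\item For $r<1$, the only stationary points are $u=\pm r^k$. These are $k$ maxima and $k$ minima, with different heights and different values of $h$.
\item The branches merge pairwise at the critical point $u=1$ of $h$ when $r=1$. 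Continuing the identities analytically past $r=1$ gives $u=1\pm\sqrt{1-r^{2k}}$, which lie off the circle $|u|=r^k$.
\item The relation $z\bar z=r^2$ survives only as $zw=r^2$ with $w\ne\bar z$, since $(1+\sqrt{1-r^{2k}})(1-\sqrt{1-r^{2k}})=r^{2k}$.
\end{itemize}
Appealing to irreducible components of the real-analytic set $\{|z|=|w|,\ h(z)=h(w),\ |f(z)|=|f(w)|\}$ cannot repair this. Continuation in the real parameter $r$ can merge branches, leave the circles, or turn back. Nothing forces a tied family to reach small radii as tied points on circles.

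The missing idea is to complexify both the conjugate point and the radius. Work with pairs $(z,w)$ satisfying $zw=s$ and $A(z)=B(w)=p$, with $s,p$ complex. Then bound the number of inverse pairs over generic (graph) points of the image curve, not the number of stationary points on a circle. For small $|s|$ this number is at most $2k$, since at most $k$ solutions of $A(z)=p$ and at most $k$ of $B(w)=p$ lie near the origin. In the example above, these are exactly the $2k$ non-conjugate pairs with $\{z^k,w^k\}=\{1\pm\sqrt{1-r^{2k}}\}$.

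Proving this bound at every graph point is the substance of the paper. It requires:
\begin{itemize}
\item properness of $(z,w)\mapsto(zw,A(z))$ away from $p=m$, with zeros of $f$ handled as harmless simple poles of $A$;
\item a separate argument at $p=m$, where inverse roots can escape to $0$ or $\infty$;
\item nonconstancy of $s$ on every branch, via $J_s(z)=g(z)g^\sharp(s/z)$;
\item an argument that the $s$-projection of the closure of the high-count graph points is open and closed, hence all of $\C$, contradicting the small-$|s|$ bound.
\end{itemize}
Openness there comes from the open mapping theorem in complex $s$ and has no counterpart along real radii. One returns to circles by choosing $r$ with $G'(r)>0$, where $G=M_g^2$, and with no non-graph image point of the form $(r^2,p)$; this excludes only countably many radii. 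Your Steps 1--2 then finish the argument.

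On sharpness, your reduction $f(z)=g(z^k)$ with $\nu_f(r)=k\nu_g(r^k)$ is exactly the paper's, but you give no $g$. Also, $\nu_g=2$ on a merely unbounded set of radii would not suffice. An unbounded discrete set is compatible with $\{\nu_f>2k-1\}$ being locally finite, so you need an accumulating set such as an interval. The paper's $g(u)=\exp(u-u^2/4)$ gives $\nu_g(s)=2$ for all $s>1$.
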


\begin{corollary}\label{cor:negative}
For every $f$ as in Theorem~\ref{thm:main} and every $R>0$, there exists
$r>R$ with $\nu_f(r)\le2k$. Hence
\[
 \liminf_{r\to\infty}\nu_f(r)\le2k<\infty.
\]
In particular, $\nu_f(r)$ cannot tend to infinity as $r\to\infty$.
\end{corollary}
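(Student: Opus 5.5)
The corollary follows from Theorem~\ref{thm:main} by a short counting argument. Fix $R>0$ and take the compact interval $I=[R+1,R+2]\subset(0,\infty)$. By Theorem~\ref{thm:main}, $E_f$ is locally finite in $(0,\infty)$, so $E_f\cap I$ is finite. But $I$ is uncountable, so some $r\in I\setminus E_f$ exists. Such an $r$ satisfies $r>R$ and, by the definition of $E_f$, $\nu_f(r)\le 2k$. Lemma~\ref{lem:circle-finite} guarantees that $\nu_f(r)$ is a well-defined finite cardinal at every positive radius, so this comparison makes sense.

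For the $\liminf$ statement, the previous step produces a sequence $r_n\to\infty$ (take $R=n$) with $\nu_f(r_n)\le 2k$ for every $n$. Hence
\[
 \liminf_{r\to\infty}\nu_f(r)\le\liminf_{n\to\infty}\nu_f(r_n)\le 2k,
\]
and $2k<\infty$ because $k$ is a fixed positive integer determined by the Taylor expansion of $g$. In particular $\nu_f(r)\to\infty$ is impossible, since that would force the $\liminf$ to be infinite. This answers question~(b) of \eqref{eq:erdos-questions} negatively.

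The argument needs no further analysis; all the work lies in Theorem~\ref{thm:main}. The only point needing care is that local finiteness is only asserted in $(0,\infty)$ and not near $0$ or near $\infty$, which is why I work on a compact interval that lies entirely to the right of $R$.
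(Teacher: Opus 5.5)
Your proof is correct and matches the paper's: the paper takes the compact interval $[R,R+1]$, notes that it contains only finitely many points of $E_f$, picks a radius in $(R,R+1)$ outside them, and lets $R\to\infty$ for the $\liminf$ bound. Using $[R+1,R+2]$ instead changes nothing, and the extra care you take about compact subintervals is harmless.
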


\begin{proof}
The compact interval $[R,R+1]$ contains only finitely many points of
$E_f$. Any radius in $(R,R+1)$ outside that finite set has count at
most $2k$. Taking arbitrarily large $R$ gives the liminf bound.
\end{proof}

\begin{remark}[Formal verification]\label{rem:lean}
The boundedness conclusion of Corollary~\ref{cor:negative} and its
negative answer to the second question of Erd\H{o}s Problem~\#1117
have a Lean~4 formalization.
The source and verification records are available from the author's
website \cite{LiLean}. The local-finiteness theorem and the sharpness
examples are proved analytically here and lie outside that formalization.
\end{remark}

We first encode maximum points as inverse pairs of a
logarithmic-derivative correspondence. Near the origin, the Taylor
expansion bounds the number of pairs with a small coordinate.
The key bound concerns points where the image is locally a graph over
the common value. If this bound failed, the closure of the graph points
with too many inverse pairs would consist locally of full analytic branches,
even at the excluded value and at infinity. Its projection to
the product coordinate would be both open and closed in $\C$, forcing
it to reach the small products where the bound is already known.
This contradiction gives the required fiber bound.

The passage from inverse pairs to circles has two steps. The radial
maximum function supplies a radius with $\nu_f(r)\le2k$ in every
positive interval. A local parametrization of stationary points then
shows that $\nu_f$ is constant on each sufficiently short side of any
fixed radius. These two facts imply local finiteness of $E_f$.

\section{Factorization and finiteness on each circle}\label{sec:finiteness}

Fix $f=z^m g$ and $k$ as in Theorem~\ref{thm:main}. The function $g$
is entire, nonconstant, and nonzero at the origin. Since
$|f(z)|=r^m|g(z)|$ on $|z|=r$, the functions $f$ and $g$ have the same
maximum points on every positive circle.

For an entire or meromorphic function $h$, write
\[
 h^\sharp(z)=\overline{h(\bar z)}.
\]
This reflection preserves holomorphy and meromorphy. The following
product will prove finiteness on circles and later exclude branches
with constant product coordinate. Here $\Cs=\C\setminus\{0\}$.

\begin{lemma}\label{lem:product}
For each $s\in\Cs$, the function
$J_s(z)=g(z)g^\sharp(s/z)$ is holomorphic and nonconstant on $\Cs$.
\end{lemma}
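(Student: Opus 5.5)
Holomorphy is immediate and nonconstancy follows from a Laurent-coefficient comparison. First, $g^\sharp$ is entire, since its Taylor coefficients are $\overline{a_n}$ when $g(z)=\sum a_nz^n$. Next, $z\mapsto s/z$ is holomorphic on $\Cs$ and maps $\Cs$ into itself. Hence $g^\sharp(s/z)$ is holomorphic on $\Cs$, and so is the product $J_s$.

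For nonconstancy we expand $J_s$ as a Laurent series on $\Cs$. We have
\[
 g(z)=\sum_{n\ge0}a_nz^n,\qquad
 g^\sharp(s/z)=\sum_{n\ge0}\overline{a_n}\,s^nz^{-n},
\]
and both series converge absolutely on $\Cs$. The product is therefore $\sum_{j\in\mathbb Z}c_jz^j$ with
\[
 c_j=\sum_{n\ge\max(0,j)}a_n\,\overline{a_{n-j}}\,s^{\,n-j},
\]
where the rearrangement is justified by absolute convergence. A holomorphic function on the annulus $\Cs$ is constant exactly when all its Laurent coefficients with $j\ne0$ vanish. So it suffices to find one $j\ne0$ with $c_j\ne0$.

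Since $g$ is nonconstant entire, it is not a polynomial of degree $0$. I split into two cases.

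\emph{Case 1: $g$ is a polynomial of degree $d\ge1$.} Take $j=d$. The only surviving term is $n=d$, which gives $c_d=a_d\overline{a_0}\ne0$ because $a_0\ne0$. Hence $J_s$ is nonconstant.

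\emph{Case 2: $g$ is transcendental.} Here the top-degree argument fails, since every $c_j$ is an infinite sum, and this is the main obstacle. I would argue instead by growth. Suppose $J_s\equiv C$. Then $C\ne0$, because $J_s(z)\to$ is not identically zero (both factors are nonzero entire functions). Consequently $g$ has no zeros in $\Cs$, and $g(0)\ne0$, so $g$ has no zeros at all. Moreover $g(z)=C/g^\sharp(s/z)$. As $z\to\infty$ we have $s/z\to0$, so $g^\sharp(s/z)\to\overline{a_0}\ne0$. Therefore $g(z)\to C/\overline{a_0}$, and $g$ is bounded near $\infty$. By Liouville's theorem $g$ is constant, a contradiction.

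This growth argument also covers Case 1, so it can serve as the whole proof. In fact it is cleaner to use only it: if $J_s\equiv C$, the boundedness of $g^\sharp(s/z)$ away from $0$ as $z\to\infty$ (it converges to $\overline{a_0}\ne0$) forces $g$ to be bounded near $\infty$, and $g$ is bounded on compacts, so Liouville makes $g$ constant, contradicting that $g$ is nonconstant. If $C=0$, then since $g\not\equiv0$ and $g^\sharp(s/\cdot)\not\equiv0$ on the connected domain $\Cs$, the identity theorem gives a contradiction immediately.
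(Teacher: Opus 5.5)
Your final argument is correct and is essentially the paper's proof: if $J_s\equiv C$, then $g(z)=C/g^\sharp(s/z)$ is bounded near $\infty$ because $g^\sharp(s/z)\to\overline{a_0}\ne0$, and Liouville's theorem forces $g$ to be constant. The Laurent-coefficient case split and the separate treatment of $C=0$ are unnecessary, since the Liouville argument covers every case directly (also, fix the garbled phrase ``$J_s(z)\to$ is not identically zero'').
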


\begin{proof}
Holomorphy follows by composition and multiplication. Suppose that
$J_s$ is a constant $c_0$. As $z\to\infty$, the factor
$g^\sharp(s/z)$ tends to $g^\sharp(0)\ne0$, so
$g(z)=c_0/g^\sharp(s/z)$ is bounded outside a sufficiently large disk.
It is also bounded on the remaining compact disk. Liouville's theorem
would make $g$ constant, a contradiction.
\end{proof}

\begin{lemma}\label{lem:circle-finite}
The set $\mathcal M_f(r)$ is finite for every $r>0$.
\end{lemma}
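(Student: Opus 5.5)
We prove finiteness of $\mathcal M_f(r)$ by using Lemma~\ref{lem:product} with $s=r^2$. The key observation is that on the circle $|z|=r$ we have $r^2/z=\bar z$. Hence
\[
 J_{r^2}(z)=g(z)\,g^\sharp(r^2/z)=g(z)\,\overline{g(\overline{r^2/z})}=g(z)\,\overline{g(z)}=|g(z)|^2 .
\]
So $J_{r^2}$ agrees with $|g|^2$ on the circle. The maximum points of $f$ on $|z|=r$ coincide with those of $g$, as noted at the start of this section. They are therefore exactly the points of the circle where the holomorphic function $J_{r^2}$ takes the real value $M_g(r)^2$.

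Now suppose $\mathcal M_f(r)$ were infinite. Since the circle $|z|=r$ is compact, these points would accumulate at some point of the circle. That point lies in the connected domain $\Cs$, on which $J_{r^2}$ is holomorphic. By the identity theorem, $J_{r^2}-M_g(r)^2$ would then vanish identically on $\Cs$. So $J_{r^2}$ would be constant, contradicting Lemma~\ref{lem:product}. Hence $\mathcal M_f(r)$ is finite. It is also nonempty, by compactness and continuity, though only finiteness is needed here.

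The main point requiring care is the identity $g^\sharp(r^2/z)=\overline{g(z)}$ on $|z|=r$. It follows from the definition $h^\sharp(w)=\overline{h(\bar w)}$ together with $\overline{r^2/z}=z$ when $|z|=r$. Everything else is a direct application of the identity theorem on the connected set $\Cs$.
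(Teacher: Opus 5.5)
Your proposal is correct and follows essentially the same route as the paper. Both arguments observe that $J_{r^2}=|g|^2=M_g(r)^2$ at the maximum points because $r^2/z=\bar z$ on the circle. An infinite maximum set would accumulate in $\Cs$, so the identity theorem would make $J_{r^2}$ constant, contradicting Lemma~\ref{lem:product}.
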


\begin{proof}
It suffices to consider $g$. If $\mathcal M_g(r)$ were infinite, it
would have an accumulation point on the circle, hence in $\Cs$.
At every maximum point $z$, the identity $r^2/z=\bar z$ gives
\[
 J_{r^2}(z)=g(z)g^\sharp(\bar z)=|g(z)|^2=M_g(r)^2.
\]
The identity theorem would make $J_{r^2}$ constant on the connected
domain $\Cs$, contrary to Lemma~\ref{lem:product}.
\end{proof}

\section{Logarithmic derivatives and small products}\label{sec:small}

Define the meromorphic functions
\[
 A(z)=m+\frac{zg'(z)}{g(z)},\qquad B(z)=A^\sharp(z).
\]
Away from zeros of $f$, one has $A(z)=zf'(z)/f(z)$.
We regard $A$ and $B$ as continuous maps into the Riemann sphere
$\Chat$, with conjugation fixing $\infty$. Both are holomorphic at zero,
and $A(0)=B(0)=m$.

The Taylor expansion of $g$ gives
\[
 A(z)-m=\frac{zg'(z)}{g(z)}
       =\frac{k a_k}{a_0}z^k+O(z^{k+1}).
\]
Thus $A-m$ has a zero of order $k$ at the origin, as does $B-m$.
In particular, $A$ and $B$ are nonconstant.
At a nonzero zero $\alpha$ of $g$ of multiplicity $n_\alpha$, write
$g(z)=(z-\alpha)^{n_\alpha}h(z)$ with $h(\alpha)\ne0$. Then
\[
 A(z)=\frac{n_\alpha\alpha}{z-\alpha}
       +m+n_\alpha+\frac{zh'(z)}{h(z)}.
\]
Consequently, all poles of $A$ and $B$ are simple.

For $s\in\Cs$ and $p\in\Chat$, define the inverse-pair set
\[
 \mathcal F(s,p)=
 \{(z,w)\in(\Cs)^2:zw=s,\ A(z)=B(w)=p\}.
\]
Its cardinality always counts distinct ordered pairs, without
multiplicity. Either coordinate determines the other.
For maximum points, the relevant pairs are $(z,\bar z)$ and their
product is $r^2$. Section~\ref{sec:radii} will choose a circle on which
all maximum points give pairs in one common fiber.

We use a local coordinate fact repeatedly. If a nonconstant
holomorphic function $h$ satisfies $h(\alpha)=p_0$, factor
\[
 h(z)-p_0=(z-\alpha)^d u(z),\qquad u(\alpha)\ne0.
\]
A holomorphic $d$th root of $u$ on a sufficiently small disk gives
the conformal coordinate
$e(z)=(z-\alpha)u(z)^{1/d}$, in which $h(z)=p_0+e(z)^d$.
Each value therefore has at most $d$ distinct inverse roots in that
disk.

Apply this fact to $A$ and $B$ at zero. Choose $\varepsilon>0$ such
that each function has at most $k$ inverse roots in $|z|<\varepsilon$,
for every finite value, and has no poles there. In any
$\mathcal F(s,p)$, at most $k$ pairs have $|z|<\varepsilon$ and at
most $k$ have $|w|<\varepsilon$. Thus at most $2k$ pairs have a small
coordinate. If $0<|s|<\varepsilon^2$, every pair has a small coordinate,
because $zw=s$. Hence
\begin{equation}\label{eq:small}
 0<|s|<\varepsilon^2
 \quad\Longrightarrow\quad
 \card\mathcal F(s,p)\le2k
 \qquad(p\in\Chat).
\end{equation}
In particular, these fibers are finite. For $p=\infty$ there are no
inverse roots in the small disk, so the same bound applies.

\section{Properness and local parametrizations}\label{sec:covers}

The correspondence records a product $s=zw$ and a common value
$p=A(z)=B(w)$. We first remove the value $m$, where inverse roots can
approach zero. Set
\[
 X=\{(z,w)\in(\Cs)^2:A(z)=B(w)\ne m\},\qquad
 \Omega=\Cs\times(\Chat\setminus\{m\}),
\]
and define
\[
 \Phi:X\longrightarrow\Omega,\qquad
 \Phi(z,w)=(zw,A(z)),\qquad Y=\Phi(X).
\]
For $(s,p)\in\Omega$, the fiber of $\Phi$ is $\mathcal F(s,p)$.
The value $m$ is excluded from $\Omega$; no assumption is made that
$A$ or $B$ omits it.

The map $\Phi$ is proper and has finite fibers. Indeed, on a compact
subset of $\Omega$, the common values stay away from $m$.
Continuity at zero then gives a lower bound $\eta>0$ for both
$|z|$ and $|w|$. If $|zw|\le S$, both coordinates are also bounded
above by $S/\eta$. The inverse image of the compact set is closed
in this compact annulus product, hence compact.
For a fixed value, the inverse roots are discrete, including at
$p=\infty$, where they are poles. Only finitely many lie in the
annulus, and either coordinate determines the other.
Properness also shows that $Y$ is closed in $\Omega$.

We now describe the image near a fixed target $(s_0,p_0)\in\Omega$.
Its central fiber is finite. If the fiber is empty, a neighborhood
of the target misses $Y$. Otherwise choose small neighborhoods of
its inverse pairs. All inverse pairs over sufficiently close targets
lie in those neighborhoods: a contrary sequence would, by properness,
have a subsequence converging to a central pair outside them.

Suppose first that $p_0$ is finite. Near each central inverse pair,
the power coordinates of Section~\ref{sec:small} give
\[
 A(z)=p_0+e_A(z)^{d_A},\qquad
 B(w)=p_0+e_B(w)^{d_B}.
\]
Choose a positive common multiple $N$ of all orders that occur.
For $p=p_0+t^N$, the local inverse pairs are obtained from
\[
 z=e_A^{-1}(\zeta t^{N/d_A}),\qquad
 w=e_B^{-1}(\xi t^{N/d_B}),\qquad
 \zeta^{d_A}=\xi^{d_B}=1.
\]
This gives a finite list of holomorphic pair maps
$P_i(t)=(z_i(t),w_i(t))$. Define their products and target maps by
\begin{equation}\label{eq:raw}
 S_i(t)=z_i(t)w_i(t),\qquad
 \gamma_i(t)=(S_i(t),p(t)),\qquad p(t)=p_0+t^N.
\end{equation}
The parameter disk is taken small enough that all inverse-coordinate
expressions above are defined.
Every map is centered at an inverse pair of $(s_0,p_0)$, so
$S_i(0)=s_0$. For any fixed $N$th root $t$ of $p-p_0$, the list $P_i(t)$
contains every nearby inverse pair at that value. Repetitions in
the list are allowed.

At $p_0=\infty$, use the value coordinate $1/p$. Since all poles
are simple, $1/A$ and $1/B$ are conformal coordinates at the central
inverse roots. The same construction then works with $N=1$ and
$p(t)=1/t$, interpreted as $\infty$ at $t=0$.
For either type of center, write
\[
 \lambda(p)=
 \begin{cases}
 p-p_0,&p_0\ne\infty,\\
 1/p,&p_0=\infty.
 \end{cases}
 \qquad\text{Then}\qquad \lambda(p(t))=t^N.
\]

For clarity, let
$\Delta_\rho=\{t\in\C:|t|<\rho\}$ and
$\Delta_\rho^*=\Delta_\rho\setminus\{0\}$.
We choose the parameter disk and target neighborhood together.
Take a disk $U_s$ about $s_0$ and a value neighborhood whose product
lies within the neighborhood where all inverse pairs are accounted for.
Since $S_i(0)=s_0$, we may choose $\delta>0$ so small that
\[
 S_i(\Delta_\delta)\subset U_s\quad\text{for every }i,\qquad
 W=U_s\times\{p:|\lambda(p)|<\delta^N\}
\]
lies in that neighborhood. Every $\gamma_i(\Delta_\delta)$ lies in
$W$. Conversely, if $(s,p)\in Y\cap W$, choose any parameter root
$t$ with $t^N=\lambda(p)$. Then $|t|<\delta$, and the list $P_i(t)$
contains every inverse pair over $(s,p)$. Consequently,
\begin{equation}\label{eq:cover}
 Y\cap W=\bigcup_i\gamma_i(\Delta_\delta).
\end{equation}
This equality describes the whole local image, including all its
inverse pairs.

\section{Local branches and distinct-pair counts}\label{sec:stabilization}

The parametrizations may repeat the same image or the same inverse
pair. We control these repetitions before counting.
For the finitely many indices $i,j$ and $N$th roots of unity $\omega$,
consider
\[
 S_i(\omega t)-S_j(t).
\]
Each difference either vanishes identically near zero or is nonzero
for all sufficiently small nonzero $t$. Shrink the disk once so that
this holds for all comparisons, and do the same for both coordinates
of $P_i(\omega t)-P_j(t)$. Choose the value radius to be $\delta^N$
again, so that \eqref{eq:cover} remains valid.

Write
\[
 \Gamma_i=\gamma_i(\Delta_\delta),\qquad
 \Gamma_i^*=\gamma_i(\Delta_\delta^*).
\]
The preceding comparisons give four useful consequences.

\begin{enumerate}
\item The punctured images $\Gamma_i^*$ and $\Gamma_j^*$ either coincide
or are disjoint. At an intersection, their parameters differ by an
$N$th root of unity, and the corresponding product comparison must
vanish identically.

\item At every noncentral point, the whole image $Y$ is locally one
holomorphic graph over the value coordinate.
Indeed, $t\mapsto t^N$ has nonzero derivative there. Parametrizations
through the point give the same graph by the first assertion; the
remaining images are locally absent. The closedness assertion below
also rules out their accumulating at that point.

\item Each punctured image has a constant fiber cardinality $K_j$.
For a fixed nonzero parameter, the complete list gives
\begin{equation}\label{eq:exact-fiber}
 \mathcal F(S_j(t),p(t))
 =\{P_i(t):S_i(t)=S_j(t)\}.
\end{equation}
After the shrinking, both the indices satisfying $S_i(t)=S_j(t)$
and the equalities among their pairs $P_i(t)$ are independent of
nonzero $t$. The number of distinct pairs is therefore constant.

\item Every union of the full images $\Gamma_i$, including their
centers, is relatively closed in the value cylinder
$\{(s,p):|\lambda(p)|<\delta^N\}$.
For a convergent sequence in such a union, pass to a fixed index.
Convergence to an interior value gives
$|t_n|\le\delta'<\delta$ eventually, for some $\delta'$.
A subsequence of the parameters then converges inside the disk,
and its image is the required limit.
\end{enumerate}

We also need to compare the central fiber with nearby fibers.
Suppose that, at a finite center, the whole image is a single graph
$s=\psi(p)$. Every product map must then equal
$\psi(p_0+t^N)$, so all punctured image counts $K_j$ have one common
value, say $K$. Choose one index for each distinct central inverse
pair. If two chosen pairs agreed at a sufficiently small nonzero
parameter, their coordinate comparisons would vanish identically,
forcing their central pairs to agree as well. Thus all chosen pairs
remain distinct nearby, and
\begin{equation}\label{eq:centralcount}
 \card\mathcal F(s_0,p_0)\le K.
\end{equation}
The graph condition here concerns the whole image, so that no
additional local branch can contribute extra central pairs.

\section{The locus of large fibers}\label{sec:high-count}

Call a point of $Y$ a \emph{graph point} if, in some product
neighborhood $V_s\times V_p$, the whole image is a single graph
over the common value:
\[
 Y\cap(V_s\times V_p)=\{(\psi(p),p):p\in V_p\},
\]
where $\psi$ is holomorphic in the value coordinate.
At infinity this coordinate is $1/p$.
This definition specifies a graph for the entire local image,
rather than for just one of its parametrizations.

Let $\mathcal T$ be the set of finite-valued graph points whose fibers
contain more than $2k$ inverse pairs. We will prove that
$\mathcal T=\varnothing$ by studying its closure
\[
 H=\overline{\mathcal T}\quad\text{in }\C\times\Chat.
\]
The small-product bound \eqref{eq:small} gives $|s|\ge\varepsilon^2$
on $\mathcal T$, hence also on $H$. Moreover,
$H\cap\Omega\subset Y$, because $Y$ is closed in $\Omega$.

First consider a center in $\Omega$, with the local images from
Section~\ref{sec:stabilization}. Their fiber counts give
\[
 \bigcup_{j:K_j>2k}\Gamma_j^*
 \ \subset\ \mathcal T\cap W
 \ \subset\ \bigcup_{j:K_j>2k}\Gamma_j.
\]
At noncentral points this follows from the graph description and
the constant counts. A finite central point in $\mathcal T$ lies on
a branch with count greater than $2k$ by \eqref{eq:centralcount}.
A center with $p_0=\infty$ is not itself in $\mathcal T$, but its punctured
branches have finite values.

The selected full images are relatively closed, and each is the closure
of its punctured image. Taking closures inside $W$ therefore gives
\begin{equation}\label{eq:interiorH}
 H\cap W=\bigcup_{j:K_j>2k}\Gamma_j.
\end{equation}
Thus every point of $H$ whose value differs from $m$, including a point
at infinity, lies on a full local branch contained in $H$.
The remaining task is to prove the same statement at the excluded
value $m$, where the correspondence need not be proper.

\section{Continuation across the excluded value}\label{sec:omitted}

Fix $(s_0,m)\in H$. We already know $s_0\ne0$.
In a target neighborhood with $|s|\le S$, each point of $\mathcal T$
has more than $2k$ inverse pairs. At most $2k$ have a small coordinate.
At least one pair therefore satisfies
\begin{equation}\label{eq:survivor}
 \varepsilon\le|z|,|w|\le S/\varepsilon.
\end{equation}
The upper bounds follow from $zw=s$.
The choice of this pair may vary with the target.

There are only finitely many roots of $A-m$ and $B-m$ in the compact
annulus in \eqref{eq:survivor}. The local power coordinates at these
roots produce a finite family of target maps
\[
 \beta_j(t)=(b_j(t),m+t^{N_m}),
\]
for a suitable positive common multiple $N_m$ of the root orders.
Each map is obtained from a holomorphic pair of nonzero inverse roots
with common value $m+t^{N_m}$, and $b_j(t)$ is their product.
Every pair satisfying \eqref{eq:survivor} lies in the chosen root
neighborhoods when its common value is sufficiently close to $m$:
otherwise a convergent subsequence would give a root outside those
neighborhoods. Thus every nearby point of $\mathcal T$ lies in the
union of the images of the $\beta_j$.
Unlike the complete family in Section~\ref{sec:covers}, this family
is used only for containment; its indices do not count all inverse pairs.

Retain only the maps with $b_j(0)=s_0$.
To justify this restriction, the finitely many other central products
have a positive minimum distance $d_0$ from $s_0$. Shrink the parameter
disk until $|b_j(t)-b_j(0)|<d_0/3$, and restrict to
$|s-s_0|<d_0/3$. The discarded maps cannot meet this target neighborhood.
If there are no such maps, no restriction is needed.

Apply the finite product comparisons of
Section~\ref{sec:stabilization} to the retained maps. Choose a parameter
radius $\rho_0$ and an open target neighborhood $V_0$ of $(s_0,m)$
such that
\[
 \mathcal T\cap V_0\subset\bigcup_j\beta_j(\Delta_{\rho_0}),
 \qquad V_0\subset\{|p-m|<\rho_0^{N_m}\}.
\]
Since all retained maps have center $(s_0,m)$, choose $0<\rho<\rho_0$
so that their full images on $\Delta_\rho$ lie in $V_0$. Set
\[
 V=V_0\cap\{|p-m|<\rho^{N_m}\},\qquad
 \Lambda_j=\beta_j(\Delta_\rho),\qquad
 \Lambda_j^*=\beta_j(\Delta_\rho^*).
\]
Each $\Lambda_j$ lies in $V$. Conversely, a point of $V$ represented
by a parameter in $\Delta_{\rho_0}$ has
$|t|^{N_m}=|p-m|<\rho^{N_m}$, so that parameter lies in $\Delta_\rho$.
The relative closedness argument of
Section~\ref{sec:stabilization}, applied on the larger disk, gives
\begin{equation}\label{eq:boundarycontainment}
 H\cap V\subset\bigcup_j\Lambda_j,\qquad \Lambda_j\subset V.
\end{equation}
Indeed, a sequence in $\mathcal T$ tending to a point of $V$ is
eventually in $V_0$. Its parameters have a subsequence converging
strictly inside $\Delta_{\rho_0}$, and the limiting value places
that parameter in $\Delta_\rho$.

The punctured images $\Lambda_j^*$ are connected, pairwise coincident
or disjoint, and locally graphs over $p$. They avoid $p=m$.
Their intersections with $H$ are relatively closed. They are also
relatively open. To see this, take $y\in H\cap\Lambda_j^*$.
By \eqref{eq:interiorH}, a full local analytic branch through $y$
lies in $H$. Restrict it to $V$. By
\eqref{eq:boundarycontainment}, it lies in the local graph
$\Lambda_j^*$. Its value coordinate is nonconstant, so the open
mapping theorem shows that it fills a neighborhood of $y$ in that
graph. This proves relative openness.

Connectedness now implies that each $\Lambda_j^*$ either lies wholly
in $H$ or misses $H$. Since $(s_0,m)\in\overline{\mathcal T}$ and
$\mathcal T$ contains no point of value $m$, at least one of these
punctured images meets $\mathcal T$ arbitrarily close to the center.
It lies in $H$, and closedness includes its center.
Hence every point of $H$ over $m$ also lies on a full local branch
contained in $H$.

\section{The product projection and the contradiction}\label{sec:global}

We have shown that every point of $H$ lies on a local branch contained
in $H$. We next show that the product coordinate is nonconstant on
each such branch.

Write one such inverse-pair parametrization as $(z(t),w(t))$.
Suppose, to the contrary, that $z(t)w(t)=s_0\ne0$ identically.
Its common value is nonconstant, so
$z(t)$ is nonconstant as well. At a noncentral parameter with finite
common value, both $g(z(t))$ and $g^\sharp(w(t))$ are nonzero.
The open mapping theorem applied to $z(t)$ gives
\[
 A(z)=B(s_0/z)
\]
on an open set. On that set,
\[
 \frac{zJ_{s_0}'(z)}{J_{s_0}(z)}
 =\frac{zg'(z)}{g(z)}
  -\frac{(s_0/z)(g^\sharp)'(s_0/z)}{g^\sharp(s_0/z)}
 =A(z)-B(s_0/z)=0.
\]
The identity theorem would make $J_{s_0}'$ vanish throughout $\Cs$,
contrary to Lemma~\ref{lem:product}. This argument also applies to
a branch centered at poles, since its noncentral values are finite.

The open mapping theorem therefore makes the product projection
of $H$ open, including at branch centers. The projection is also
closed: $H$ is closed in $\C\times\Chat$, and $\Chat$ is compact.
If $\mathcal T$ were nonempty, this projection would be a nonempty
open and closed subset of the connected plane, hence all of $\C$.
But $|s|\ge\varepsilon^2$ on $H$. The contradiction proves the
fiber bound needed below.

\begin{proposition}\label{prop:empty}
Every finite-valued graph point of $Y$ has at most $2k$ inverse pairs.
Equivalently, $\mathcal T=\varnothing$.
\end{proposition}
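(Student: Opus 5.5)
The plan is to argue by contradiction, assembling the local facts about $H=\overline{\mathcal T}$ from Sections~\ref{sec:high-count}--\ref{sec:global}. We assume $\mathcal T\ne\varnothing$, so that $H$ is a nonempty closed subset of $\C\times\Chat$. The goal is to show that the projection $\pi(H)=\{s:(s,p)\in H\}$ is a nonempty subset of $\C$ that is both open and closed, and hence equal to $\C$. This will contradict the small-product bound \eqref{eq:small}, which gives $|s|\ge\varepsilon^2$ on $\mathcal T$ and therefore on $H$.

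The first step is the local branch structure. Take any $(s_0,p_0)\in H$; by \eqref{eq:small}, $s_0\ne0$. There are two cases.
\begin{itemize}
\item If $p_0\ne m$, including the case $p_0=\infty$, the point lies in $\Omega$ and hence in $Y$. Then \eqref{eq:interiorH} exhibits $H$ near the point as a finite union of full images $\Gamma_j$, and each $\Gamma_j$ passes through the center $(s_0,p_0)$.
\item If $p_0=m$, Section~\ref{sec:omitted} shows that some full image $\Lambda_j=\beta_j(\Delta_\rho)$ through $(s_0,m)$ is contained in $H$.
\end{itemize}
In both cases, every point of $H$ lies on a holomorphic inverse-pair parametrization $t\mapsto(z(t),w(t))$ whose image under $\Phi$ (extended to the value $m$) is contained in $H$ and which is centered at the given point.

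The second step, which I expect to be the real obstacle, is showing that the product coordinate $S(t)=z(t)w(t)$ is nonconstant on every such branch. I would follow the computation of Section~\ref{sec:global}. Suppose $S\equiv s_0$. The common value $p(t)$ is nonconstant, so $z(t)$ is nonconstant, and by the open mapping theorem $A(z)=B(s_0/z)$ holds on an open set of $z$, chosen away from zeros and poles. On that set the logarithmic derivative satisfies $zJ_{s_0}'/J_{s_0}=A(z)-B(s_0/z)=0$. The identity theorem then forces $J_{s_0}$ to be constant on $\Cs$, which contradicts Lemma~\ref{lem:product}. One must check that this also covers branches centered at poles or at the value $m$. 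It does, because the argument uses only noncentral parameters, where the values are finite and the pairs lie in $(\Cs)^2$.

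The third step is the topology of the projection.
\begin{itemize}
\item \emph{Openness.} Since $S$ is nonconstant and holomorphic, $S(\Delta)$ contains a neighborhood of $s_0$, and this neighborhood lies in $\pi(H)$.
\item \emph{Closedness.} $\pi$ is the projection $\C\times\Chat\to\C$ along a compact factor, so it is a closed map, and $H$ is closed.
\end{itemize}
Connectedness of $\C$ then gives $\pi(H)=\C\ni 0$. This is impossible, since $|s|\ge\varepsilon^2$ on $H$. Hence $\mathcal T=\varnothing$, which is exactly the statement that every finite-valued graph point has at most $2k$ inverse pairs.
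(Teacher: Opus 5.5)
Your proposal is correct and follows essentially the same route as the paper. You use the local branch structure of $H$ from \eqref{eq:interiorH} and Section~\ref{sec:omitted}, show the product coordinate is nonconstant via $J_{s_0}$ and Lemma~\ref{lem:product}, and conclude with the open-and-closed product projection contradicting $|s|\ge\varepsilon^2$, including the point that each branch is centered at the given point of $H$, which the openness step needs.
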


\section{A bounded count in every positive interval}\label{sec:radii}

We now return to maximum-modulus points. We choose a radius that avoids
the exceptional targets of the correspondence and at which the radial
maximum has positive derivative.

\begin{proposition}\label{prop:dense}
For every $0<a<b$, there exists $r\in(a,b)$ with $\nu_f(r)\le2k$.
\end{proposition}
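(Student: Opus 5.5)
The plan is to show that, for a suitable $r\in(a,b)$, all maximum points on $|z|=r$ produce inverse pairs in one fiber $\mathcal F(r^2,p)$, and that $(r^2,p)$ is a finite-valued graph point of $Y$ with $p\ne m$. Proposition~\ref{prop:empty} then gives $\nu_f(r)\le\card\mathcal F(r^2,p)\le2k$.

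\textbf{Step 1: the common value at a maximum point.} Let $\mu(r)=\log M_g(r)$, which is convex in $\log r$, so it is differentiable except at countably many radii. At a maximum point $z_0=re^{i\theta_0}$, the function $\theta\mapsto\log|g(re^{i\theta})|$ has a maximum at $\theta_0$. Hence its $\theta$-derivative $-\operatorname{Im}(zg'/g)$ vanishes there, so $z_0g'(z_0)/g(z_0)$ is real. Comparing $\log|g(\rho e^{i\theta_0})|\le\mu(\rho)$ for $\rho$ near $r$, with equality at $\rho=r$, shows that at a differentiability radius
\[
 \operatorname{Re}\frac{z_0g'(z_0)}{g(z_0)}=r\mu'(r).
\]
Thus $A(z_0)=m+r\mu'(r)=:p$ is the same real number for all maximum points. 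Since $A$ has real value $p$ at $z_0$, we get $B(\bar z_0)=\overline{A(z_0)}=p$. Each maximum point therefore gives a pair $(z_0,\bar z_0)\in\mathcal F(r^2,p)$, and distinct maximum points give distinct pairs.

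\textbf{Step 2: avoiding $p=m$.} We need $r\mu'(r)>0$. Since $g$ is nonconstant, $\mu$ is strictly increasing and convex in $\log r$. So its right derivative in $\log r$ is positive for every $r>0$; otherwise convexity would force $\mu$ to be nonincreasing on $(0,r)$, contradicting strict increase. Restricting to differentiability radii therefore gives $p>m$ and $p$ finite.

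\textbf{Step 3: the graph-point property (main obstacle).} We need the set $\mathcal B$ of targets in $\Omega$ that are \emph{not} graph points to be small, so that $(r^2,m+r\mu'(r))$ avoids it for some $r\in(a,b)$. By Section~\ref{sec:stabilization}, noncentral points of a local branch are graph points. So $\mathcal B$ consists of points where distinct branches $\Gamma_i$ meet with different product maps, together with branch centers. I would argue that $\mathcal B$ is a countable (locally finite) subset of $\Omega$. Distinct punctured branches are disjoint, so two different branches can meet only at a common center. Centers are images of points where $A'$ or $B'$ vanishes, poles, or finitely many per compact set; these form a discrete, hence countable, set. Along the curve $r\mapsto(r^2,m+r\mu'(r))$, the first coordinate $r^2$ is injective, so only countably many $r$ hit $\mathcal B$. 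Together with the countably many nondifferentiability radii of $\mu$, all but countably many $r\in(a,b)$ qualify. For such $r$, Proposition~\ref{prop:empty} gives $\nu_f(r)=\nu_g(r)\le2k$.

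The delicate point is verifying that centers are really countable: $\mathcal B$ must be contained in the images of the critical points of $A$ and $B$, the poles, and the points where either $zw$ is critical along a branch. The cleanest way I would try is to use local finiteness of the covers~\eqref{eq:cover}. Each center $(s_0,p_0)$ has a neighborhood $W$ in which every point other than the finitely many points where $\Gamma_i$, $\Gamma_j$ intersect with nonidentical product maps is a graph point. Those exceptional intersections of two distinct analytic curves are isolated, so $\mathcal B\cap W$ is discrete. Since $\Omega$ is $\sigma$-compact, $\mathcal B$ is countable.
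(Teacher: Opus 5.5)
Your proposal is correct and follows the paper's proof. Non-graph targets are locally finite in $\Omega$, since only the center of a small cover neighborhood can fail to be a graph point, so only countably many radii $r$ have $(r^2,p)$ non-graph for some $p$; at any remaining radius where the maximum function is differentiable with positive derivative, all maximum points give pairs $(z,\bar z)$ in the single fiber $\mathcal F(r^2,p)$ with $p>m$, and Proposition~\ref{prop:empty} applies. The only variation is how you find that radius: you use convexity of $\log M_g(r)$ in $\log r$, which gives co-countably many good radii with positivity of the derivative automatic, whereas the paper uses a Lipschitz bound on $M_g(r)^2$ and absolute continuity to get a positive-measure set with $G'>0$. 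Your intermediate description of centers via critical points of $A$ and $B$ is imprecise, but your closing local-finiteness argument is the correct one.
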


\begin{proof}
Let $D$ be the set of finite-valued points of $Y$ that are not graph
points. The local parametrizations show that $D$ is locally finite:
in each sufficiently small target neighborhood, only its center
can fail to be a graph point. A target outside $Y$ has a neighborhood
disjoint from $Y$, since $Y$ is closed in $\Omega$.
The ambient space $\Cs\times(\C\setminus\{m\})$ is second countable,
so $D$ is countable.

Let $Q$ consist of those positive radii $r$ for which $(r^2,p)\in D$
for some finite $p$. The set $Q$ is countable, because each target
determines at most one positive radius. If $r\notin Q$, then for every
real $p>m$, the set $\mathcal F(r^2,p)$ is either empty or a fiber at a graph point.
Properness and Proposition~\ref{prop:empty} give
\[
 \mathcal F(r^2,p)\text{ is finite},\qquad
 \card\mathcal F(r^2,p)\le2k.
\]

Set $G(r)=M_g(r)^2$. The maximum principle makes $G$ strictly increasing:
equality at two radii would put a maximum point from the smaller
circle inside the larger disk and force $g$ to be constant.
Also $G(r)\ge|g(0)|^2>0$.

On the disk $|z|\le b$, the real gradient satisfies
\[
 \bigl\|\nabla |g(z)|^2\bigr\|=2|g(z)||g'(z)|\le C
\]
for some constant $C$. If $z$ maximizes $|g|$ at radius $r\in[a,b]$,
compare it with $(u/r)z$ at radius $u\in[a,b]$.
The two points have distance $|r-u|$, and the joining segment lies
in that disk. Thus $G(r)\le G(u)+C|r-u|$.
Interchanging the radii proves
$|G(r)-G(u)|\le C|r-u|$.

It follows that $G$ is absolutely continuous on $[a,b]$ and
\[
 \int_a^bG'(u)\,du=G(b)-G(a)>0.
\]
Since $G'\ge0$ almost everywhere, the radii where $G$ is differentiable
and $G'>0$ have positive measure in $(a,b)$.
Removing the countable set $Q$ leaves such a radius $r\notin Q$.

Fix any $z\in\mathcal M_g(r)$. Its value is nonzero.
Angular stationarity gives
\[
 \left.\frac{d}{d\theta}|g(e^{i\theta}z)|^2\right|_{\theta=0}
 =-2|g(z)|^2\operatorname{Im}\frac{zg'(z)}{g(z)}=0.
\]
To compute the radial derivative, use
$h_z(u)=|g(uz/r)|^2$. Since $h_z(u)\le G(u)$ and
$h_z(r)=G(r)$, the differentiable function $G-h_z$ has a minimum
at $r$. Therefore
\[
 G'(r)=h_z'(r)
 =\frac{2|g(z)|^2}{r}\operatorname{Re}\frac{zg'(z)}{g(z)}.
\]
This holds for every maximum point, including when several points tie.
Because $|g(z)|^2=G(r)$, all of them have the same real value
\[
 A(z)=p=m+\frac{rG'(r)}{2G(r)}>m.
\]
The injection $z\mapsto(z,\bar z)$ sends $\mathcal M_g(r)$ into
$\mathcal F(r^2,p)$, since
$B(\bar z)=\overline{A(z)}=p$.
The fiber bound gives $\nu_g(r)\le2k$, and
$\mathcal M_f(r)=\mathcal M_g(r)$ completes the proof.
\end{proof}

\section{Local finiteness and sharpness}\label{sec:local-finiteness}

Proposition~\ref{prop:dense} gives a bounded count in every interval.
To turn this into local finiteness of the exceptional radii, we need
a local description of how maximum points move.

\begin{lemma}[One-sided local constancy]\label{lem:onesided}
For every $r_0>0$, there exists $0<\delta<r_0$ such that $\nu_f$ is
constant on each of $(r_0-\delta,r_0)$ and $(r_0,r_0+\delta)$.
The two constants need not agree with each other or with $\nu_f(r_0)$.
\end{lemma}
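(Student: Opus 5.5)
The plan is to describe, near the circle $|z|=r_0$, all critical points of $|g|$ restricted to circles, and then follow the maximal ones. Write $z=re^{i\theta}$ and $\Psi(r,\theta)=\log|g(re^{i\theta})|^2=\log J_{r^2}(re^{i\theta})$. This function is real-analytic near every point of $|z|=r_0$ where $g\ne0$. By Lemma~\ref{lem:circle-finite} the maximum points $z_1,\dots,z_n$ on $|z|=r_0$ are finite, and $g(z_j)\ne0$. Away from small arcs $I_j$ around the $\theta_j$, we have $|g|\le M_g(r_0)-\eta$ on $|z|=r_0$. By continuity this bound persists for $|r-r_0|<\delta$, while $M_g$ is continuous. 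Hence for $r$ near $r_0$ every maximum point lies in some arc $I_j$.

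Next, inside each arc I study the angular-stationarity set $\{\partial_\theta\Psi=0\}$, which by the computation in the proof of Proposition~\ref{prop:dense} is $\{\operatorname{Im}A(re^{i\theta})=0\}$. It is a real-analytic set in the $(r,\theta)$ rectangle. The function $\theta\mapsto\partial_\theta\Psi(r_0,\theta)$ is not identically zero, since otherwise $|g|$ would be constant on an arc and $J_{r_0^2}$ would be constant. The Weierstrass preparation theorem in $\theta$ at $(r_0,\theta_j)$, followed by a Puiseux factorization of the resulting distinguished polynomial in the real variable $r$, then gives the following. On $(r_0,r_0+\delta)$, and likewise on $(r_0-\delta,r_0)$, the real zeros in $I_j$ form finitely many continuous branches $\theta=\phi_{j,l}(r)$. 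Each branch is given by a convergent Puiseux series in $(r-r_0)^{1/q}$, and two branches either coincide identically or are disjoint for small $r-r_0>0$. Every local maximum of $\Psi(r,\cdot)$ in $I_j$ lies on one of these branches. So for $r$ just above $r_0$ the candidates for maximum points are the finitely many distinct points $re^{i\phi_{j,l}(r)}$.

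The maximum points at radius $r$ are then exactly those candidates $\alpha$ with $F_\alpha(r)=\max_\beta F_\beta(r)$, where $F_\alpha(r)=\Psi(r,\phi_\alpha(r))$. Each $F_\alpha$ is again given by a convergent Puiseux series in $(r-r_0)^{1/q}$ on $(r_0,r_0+\delta)$. For two branches, the difference $F_\alpha-F_\beta$ is either identically zero or has constant nonzero sign on a shorter interval $(r_0,r_0+\delta')$, because a nonzero convergent series has isolated zeros. After finitely many such shrinkings, the set of branches achieving the maximum is constant on $(r_0,r_0+\delta)$. Since distinct branches give distinct points, $\nu_g=\nu_f$ is constant there. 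The left side is handled the same way.

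The main obstacle is making the branch description rigorous over the real parameter: the real zeros in $\theta$ of an analytic function of two real variables. I would complexify both $r$ and $\theta$ and apply Weierstrass preparation in $\theta$. I would then use the Newton--Puiseux theorem for the distinguished polynomial with coefficients analytic in $r$. The roots are Puiseux series in $(r-r_0)^{1/q}$. A given root is real for all small $r>r_0$ or for only isolated $r$, because its imaginary part is again such a series. Discarding the nonreal ones leaves finitely many real-analytic-in-$(r-r_0)^{1/q}$ branches, with the coincidence dichotomy built in.
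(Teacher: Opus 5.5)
Your proposal is correct. Its overall architecture matches the paper's proof:
\begin{itemize}
\item compactness windows around the finitely many central maxima;
\item a parametrization of the angularly stationary points near each center by a fractional power of the radial displacement;
\item a comparison of finitely many analytic heights.
\end{itemize}

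The difference is in how you obtain the local parametrization. The paper uses the logarithmic coordinate $\zeta=\log z$ and the fact that stationarity is $\operatorname{Im}A(e^\zeta)=0$ with $A(e^\zeta)$ holomorphic. The conformal normal form $A(e^\zeta)-p_*=\chi(\zeta)^d$, with $p_*$ real, shows the stationary set is exactly $2d$ real-analytic half-arcs $\chi^{-1}(e^{i\pi j/d}t)$. A one-variable factorization $\operatorname{Re}\zeta_j(t)-x_0=\sigma_jt^{\ell_j}u_j(t)$ then reparametrizes each half-arc by the radial displacement. So each arc meets every nearby circle on its side exactly once.

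You instead treat $\partial_\theta\Psi$ as an arbitrary real-analytic function of two real variables. You get the branches from Weierstrass preparation in $\theta$ and Newton--Puiseux in $r$, after discarding the non-real roots. The nondegeneracy input is the same in both. Your $\theta$-regularity at $(r_0,\theta_j)$ is equivalent to the paper's statement that no half-arc has constant real part $x_0$, and both follow from $J_{r_0^2}$ being nonconstant.

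What each route buys:
\begin{itemize}
\item The paper's route is more elementary, needing only the one-variable normal form of a holomorphic function. It also gives an explicit geometric picture of the stationary arcs.
\item Yours does not use the holomorphic origin of the stationarity condition, so it would apply to any real-analytic family of functions on circles. The cost is two-variable machinery.
\end{itemize}

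When you write it out, make the following explicit:
\begin{itemize}
\item Choose the arcs $I_j$ inside the Weierstrass polydiscs before fixing $\eta$.
\item Use one common denominator $q$ across all centers when comparing the heights $F_\alpha$.
\item For $r<r_0$, expand the roots in $e^{i\pi/q}(r_0-r)^{1/q}$. Their imaginary parts are again real convergent power series in $(r_0-r)^{1/q}$, so the real/non-real dichotomy carries over unchanged.
\end{itemize}
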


\begin{proof}
Set $x_0=\log r_0$. There are finitely many central maximum points
by Lemma~\ref{lem:circle-finite}.
Near each such point $z_0$, use a logarithmic coordinate
$\zeta=x+i\theta$, with $e^{\zeta_0}=z_0$.
The function $A(e^\zeta)$ is holomorphic there, since $g(z_0)\ne0$,
and it is nonconstant by the identity theorem.
Its central value $p_*=A(e^{\zeta_0})$ is real by angular stationarity.
The local power coordinate gives
\[
 A(e^\zeta)-p_*=\chi(\zeta)^d,\qquad
 \chi(\zeta_0)=0,
\]
with $\chi$ conformal. Thus the stationary set
$\operatorname{Im}A(e^\zeta)=0$ consists of the real-analytic half-arcs
\begin{equation}\label{eq:stationaryarcs}
 \zeta_j(t)=\chi^{-1}(e^{i\pi j/d}t),\qquad
 t\ge0,\quad j=0,\ldots,2d-1.
\end{equation}

No arc can have constant real part $x_0$. Such an arc would consist
of stationary angular points on the central circle. The squared
modulus would be constant on an open circle arc, hence on the whole
circle by real analyticity, contradicting
Lemma~\ref{lem:circle-finite}.
Consequently, for each arc,
\[
 \operatorname{Re}\zeta_j(t)-x_0
   =\sigma_j t^{\ell_j}u_j(t),\qquad
 \sigma_j\in\{-1,1\},\quad \ell_j\ge1,\quad u_j(0)>0,
\]
where $u_j$ is real analytic.
The change of parameter $t\mapsto t\,u_j(t)^{1/\ell_j}$ has positive
derivative at zero and a real-analytic inverse.
Reparametrize each arc accordingly, retaining the notation $\zeta_j$.
Then
\begin{equation}\label{eq:radialnormalform}
 \operatorname{Re}\zeta_j(t)=x_0+\sigma_jt^{\ell_j}.
\end{equation}
Each shortened half-arc therefore meets each sufficiently close
circle on its side exactly once.

Collect the arcs from all central maxima in one finite list.
Choose a positive common multiple $\ell_*$ of their exponents.
For a fixed side $\sigma\in\{-1,1\}$, put
$x=x_0+\sigma v^{\ell_*}$ with $v>0$.
The arcs with $\sigma_j=\sigma$ give the candidate points
\begin{equation}\label{eq:commonparameter}
 z_j(v)=\exp\!\left(\zeta_j(v^{\ell_*/\ell_j})\right),
 \qquad |z_j(v)|=e^{x_0+\sigma v^{\ell_*}}.
\end{equation}
Each candidate is real analytic in $v$ at zero.
For small positive $v$ they are distinct: half-arcs in a single
chart meet only at their center, the exponential is injective
in that chart, and different central charts have disjoint images
in the $z$-plane.

The squared heights $|g(z_j(v))|^2$ are real analytic.
For any two candidates on the same side, the difference of their squared heights either
vanishes identically or has a nonzero leading Taylor coefficient.
Thus every height comparison has a fixed outcome for sufficiently
small positive $v$. Since there are finitely many comparisons,
one parameter interval works for all of them.

It remains to check that these candidates include all nearby global
maxima. After the preceding shrinkings, choose disjoint angular
windows about the central maxima and a thin radial interval whose
products lie in the final chart neighborhoods.
Compactness gives $\eta>0$ such that
\[
 |g(r_0e^{i\theta})|^2
 \le M_g(r_0)^2-3\eta
 \qquad\text{outside those windows}.
\]
For $r$ close to $r_0$, uniform continuity bounds the squared modulus
outside the windows by $M_g(r_0)^2-2\eta$.
At any fixed central maximizing angle it is greater than
$M_g(r_0)^2-\eta$. Every nearby maximum therefore lies in a window
and on one of the shortened stationary arcs.
By \eqref{eq:radialnormalform}, it is precisely one of the candidates
in \eqref{eq:commonparameter}. Each side has at least one candidate,
since every nearby circle has a maximum.

The indices of the candidates with largest height consequently form a fixed set
on each side. Its cardinality is $\nu_g(e^x)=\nu_f(e^x)$.
Exponentiating the two $x$-intervals and reducing $\delta$ proves
the lemma.
\end{proof}

\begin{proof}[Proof of Theorem~\ref{thm:main}]
Fix $r_0>0$ and use Lemma~\ref{lem:onesided}.
Each punctured interval contains a radius from
Proposition~\ref{prop:dense}. Its constant count must therefore be
at most $2k$. Hence
\[
 E_f\cap(r_0-\delta,r_0+\delta)\subset\{r_0\}.
\]
There is no accumulation point of $E_f$ in $(0,\infty)$.
An infinite intersection with a compact subinterval would have
such an accumulation point, so $E_f$ is locally finite.
Example~\ref{ex:sharp} proves sharpness.
\end{proof}

A locally finite subset of $(0,\infty)$ is countable and has
Lebesgue measure zero. In particular, $\nu_f(r)\le2k$ for almost
every $r>0$.

\begin{example}[Sharpness]\label{ex:sharp}
For an integer $k\ge1$, let
\[
 f_k(z)=\exp\!\left(z^k-\frac{z^{2k}}4\right).
\]
The first coefficient gap is $k$, since
$f_k(z)=1+z^k+O(z^{2k})$.
For $z^k=r^ke^{i\vartheta}$, completing the square gives
\[
 \begin{aligned}
 \log|f_k(z)|
 &=r^k\cos\vartheta-\frac{r^{2k}}4\cos(2\vartheta)\\
 &=\frac{r^{2k}}4+\frac12
   -\frac12(r^k\cos\vartheta-1)^2.
 \end{aligned}
\]
For $0<r\le1$, the maximum occurs only when $\cos\vartheta=1$,
so $z^k=r^k$. For $r>1$, it occurs when $\cos\vartheta=r^{-k}$,
giving exactly the two values
$z^k=1\pm i\sqrt{r^{2k}-1}$.
Each value has $k$ distinct preimages on $|z|=r$. Thus
\[
 \nu_{f_k}(r)=
 \begin{cases}
 k,&0<r\le1,\\
 2k,&r>1.
 \end{cases}
\]
The bound $2k$ is attained on every circle with radius greater than one.
\end{example}

\begin{example}[Exceptional circles are necessary]\label{ex:exception}
Fix an integer $n\ge3$ and $0<\tau<1/8$, and let
\[
 f_{n,\tau}(z)=\exp\!\left(z^n+\tau z(1-z^n)^4\right).
\]
Its value at zero is one and its derivative there is $\tau$, so
its first coefficient gap is one.
On $|z|=1$, one has $|1-z^n|\le2$ and
$1-\operatorname{Re}(z^n)=|1-z^n|^2/2$. Hence
\[
 \begin{aligned}
 1-\operatorname{Re}\bigl(z^n+\tau z(1-z^n)^4\bigr)
 &\ge\frac{|1-z^n|^2}{2}-\tau|1-z^n|^4\\
 &\ge\left(\frac12-4\tau\right)|1-z^n|^2.
 \end{aligned}
\]
The last coefficient is positive. The real part of the exponent is
strictly below one unless $z^n=1$, and equals one at the $n$th roots
of unity. Consequently, $\nu_{f_{n,\tau}}(1)=n>2$.
Exceptional radii cannot therefore be removed from the theorem.
\end{example}

\section{Relation to version 1}\label{sec:scope}

Version~1 \cite{LiV1} already gave the negative answer to the second
question of Erd\H{o}s Problem~\#1117, together with the locally finite
exceptional-set bound and its sharpness.
This version retains those conclusions and expands the continuation
and counting arguments.

The proof uses the closure of the graph points with more than $2k$
inverse pairs. At the excluded value $m$, one bounded inverse pair
places each target in a finite family of parametrized images.
Connectedness then determines which full branches lie in the closure.
This avoids the global generic-degree argument of version~1.
Finally, Proposition~\ref{prop:dense} and Lemma~\ref{lem:onesided}
give local finiteness directly from dense good radii and one-sided
constancy.

\paragraph{Assistance disclosure.}
OpenAI's ChatGPT and Codex (GPT-6) were used to develop the proof and
prepare the manuscript. The human author takes full responsibility for
the mathematical arguments and the final manuscript.

\end{document}